\newcommand{\ishy}[1]{{\color{black}#1}}
\newcommand{\real}{\mathbb{R}}
\newcommand{\vectornorm}[1]{\| #1\|}
\DeclarePairedDelimiterX\Set[1]\{\}{
	
	#1
}
\newcommand{\set}[2]{\left\{#1 \; \left|\;\; #2 \right.\right\}}
\DeclareMathOperator{\prox}{prox}
\newcommand{\rev}[1]{{\color{black}#1}}
\newcommand{\ishyone}[1]{{\color{black}#1}}
\newcommand{\noamr}[1]{{\color{black}#1}}
\newtheorem{theorem}{Theorem}
\newtheorem{claim}{Claim}
\newtheorem{Case}{Case}
\newtheorem{lemma}[theorem]{Lemma}
\newtheorem{corollary}[theorem]{Corollary}
\title{Projecting onto a Capped Rotated Second-Order Cone}
\author[1,2]{Noam Goldberg}[orcid=0000-0002-1340-7569]
\ead{goldnoam@bgu.ac.il}
\affiliation[2]{organization={Department of Management, Bar-Ilan University (on leave)},
               city={Ramat Gan},
               country={Israel}}
\affiliation[1]{organization={Department of Industrial Engineering \& Management, Ben-Gurion University of the Negev},
                city={Beer Sheva},
                country={Israel}}
\author[3]{Ishy Zagdoun}
\affiliation[3]{organization={Department of Mathematics, Bar-Ilan University},
                city={Ramat Gan},
                country={Israel}}
\shorttitle{Projecting on a capped RSOC}
\shortauthors{N. Goldberg and I. Zagdoun}
\begin{document}

	\begin{abstract} 
 We derive a closed-form expression for the projection onto a capped rotated second-order cone—a convex set that arises in perspective relaxations of nonlinear programs with binary indicator variables. 

The closed-form solution involves three distinct cases, one of which reduces to the classical projection onto a second-order cone. The remaining two cases yield nontrivial projections, for which we provide necessary and sufficient conditions under which the solution lies on the intersection of the cone and a facet of a box.

	\end{abstract}
	\begin{keywords}
projection \sep second-order cone  \sep perspective \sep indicator variables \sep sparse optimization 
\end{keywords}
	%\keywords{keyword1, Keyword2, Keyword3, Keyword4}

	\maketitle

	\section{Introduction}
    %The need to enforce sparse solutions in optimization arises naturally in many decision and learning models. The %signal recovery problem entails determining a sparse solution of an undetermined linear %system~~\cite{Chen2001,Beck2010,Marques2018}. Although sparsity can be directly measured as the $l_0$-norm, %which is defined for $x\in\real^m$ as  
%	\[
%	\vectornorm{x}_0 = \card{\set{i=1,\ldots,m}{x_i\neq 0}}, 
%	\]
%	traditionally, the $l_1$-norm, $\vectornorm{x}_1=\sum_{i=1}^n\abs{x_i}$, has been used~\cite{Tibshirani96}. Notwithstanding its computational advantages, however, the $l_1$-norm 
%		{has noteworthy} deficiencies~\cite{Su2017}. %In machine learning models and optimization problems that are driven by data, sparsity is a way to prevent over-fitting of the data and to rely on prediction models that are as simple as possible, following Occam's razor principle. Additionally, sparse models are preferred as they enhance the interpretability of the model. Further, a cardinality or sparsity constraint is especially suitable, for example, under a limited budget when each feature involves a diagnostic test that imposes a direct cost. 
    
	This paper develops a closed-form solution for the projection onto a capped rotated second-order cone (RSOC) -- an intersection of a rotated second-order cone and a particular box -- which %can be applied to %sparse 
	arises when solving tight continuous relaxations of convex optimization problems with binary indicator variables. %, whose formulation involves either a constraint or penalty term in the objective function that promotes a sparse optimal solution vector. 
    This relaxation applies to the perspective formulation~\cite{AktuRk09,Gunluk10,Frangioni2011}, whose continuous relaxation is known as the \emph{perspective relaxation}. Such a continuous relaxation has also been proposed in the context of sparse statistical learning problems, where it is known as the Boolean relaxation~\cite{Pilanci2015}, and it has been further generalized recently in the study of ~\cite{Bertsimas2023}, which uses the matrix perspective to solve matrix optimization problems in which low-rank solutions are desired. Letting $m,n_1,\dots,n_m$ be positive integers, %letting 
 $X\subseteq\real^{n_1}\times\cdots\times\real^{n_m}\times\{0,1\}^m$, and letting $q(x,y,z)$ be a closed convex function, the perspective formulation involves \noamr{mixed-integer optimization (MIO) problems} of the form %relaxation applies to sparse optimization problems of the form
	\begin{equation}
		\label{prob:mio}
		\min_{(x_1,\ldots,x_m, z)\in X,\noamr{y\in\real^m_+}} \set{\noamr{q(x,y,z)}}{\noamr{\vectornorm{x_i}^2\leq y_iz_i}, \; i\in [m]}.
	\end{equation} %where %$M$ is a large constant and 
	%$q$ is strictly convex in $x$. %For convenience, the problem is expressed here in its penalty-based form. Alternatively, it is often  written as a cardinality constrained problem with the
	While problem \eqref{prob:mio} is stated in a general form, %applications tend to have a more specific structure, where %usually 
	%the objective and constraints, which define the set $X$, %are all 
	%\noamr{may be} separable in the continuous variables $x_i\in\real^{n_i}$ for $i=1,\ldots,m$ and the binary indicator variable vector $z$. For example, 
    typically, in sparse optimization problems, $q$ tends to be linear in $z$, having a penalty term such as $\lambda\sum_{i=1}^mz_i$, for some $\lambda>0$. Alternatively, the decision variable vector $z$ may not appear in the objective, in which case %the
    an objective penalty is %often 
    replaced by a constraint in terms of the $z$ variables, such as a simple cardinality constraint, $\sum_{i=1}^mz_i\leq k$, for some integer $k$.

		The rapid computation of projections can be especially useful as a building block for effective first-order optimization methods. 
%,  %projected gradient, 
%		as well as for active/working-set and hybrid methods that apply projected first-order gradient steps in order to identify a reduced subspace of non-fixed 
%		variables. %that is implied by the set of active constraints. 
		For example, %projecting onto a 
		simple box projections have been widely adopted and have proven most effective for general large-scale bound-constrained nonlinear programming~\cite{More1991,Hager2006} and for solving SVMs~\cite{Bottou2007}. %The projection also plays a role in stochastic subgradient variants, such as Pegasus~\cite{shalev2007pegasos}, and in an accelerated proximal gradient method that has been recently proposed for SVMs~\cite{ito2017unified}. 
        Second-order conic constraints %effectively 
		essentially generalize nonnegativity constraints and correspond to a special case of semidefinite constraints.	Handling semindefinite constraints may not be tractable in practice. %The reason why 
        In the context of first order methods the source of intractability is that projecting onto the $n\times n$-dimensional semidefinite cone involves diagonalizing a matrix, which is of order $O(n^3)$. In sharp contrast to this challenge,  projection onto an %the %second-order cone 
		SOC is given by an $O(n)$ closed-form solution (see \cite{Alizadeh03}, and original proof in \cite[Chapter~3]
        {Bauschke96}).  
        %More recently, 
		%\ng{and} extending projection-based box constrained quadratic programming methods, an active set method that 
		Projection-based methods that exploit this specific closed-form result %onto a standard second-order 
		%cone
        range from projected gradient~\cite{goldberg15} to 
		%have been proposed for conic optimization within 
        dual-based decomposition methods~\cite{Yang2015,Odonoghue2016}. %, as well as for specific structured problems such as quadratic objective second-order conic constrained optimization problems. 
        Projection-based and more general proximal methods in optimization include a variety of first-order methods; see~\cite{Dvurechensky2021} for a recent overview and~\cite{Beck17} for a more detailed account. Moreoever, some second-order methods may also involve projection onto cones; see~\cite{henrion2012projection}.
        %, as well as algorithms for feasibility problems~\cite{bauschke1996projection}. 
        
        %First-order methods for constrained optimization commenced  with classical projected gradient~\cite{Bertsekas76}, which then progressed to an accelerated projected gradient~\cite{Nesterov1983},  and to more general proximal gradient methods, such as \noamr{the} fast iterative shrinkage-thresholding algorithm (FISTA)~\cite{Beck09}. %\noamr{These} generalizations \noamr{of projected gradient} may be specialized \noamr{of course} to exploit rapid projection computations \noamr{when the latter are available}. 
        %Further, as noted, the augmented Lagrangian method (ALM), as well as the alternating direction method of multipliers (ADMM), also utilize projection-based steps to effectively solve %linearly constrained second-order 
	%	conic optimization problems; see for example~\cite{Yang2015} for ALM   and~\cite{Boyd2010,Wen2010,Odonoghue2016} for ADMM  in this context. 

		While deriving the projection onto a box or affine set is a standard textbook excercise, several studies have examined projections onto %other
        more elaborate convex sets. %that are more elaborate than a box, including 
        This includes projection onto a unit simplex~\cite{Condat2016,Duchi2008}, projection onto hyperbolas \cite{bauschke23}, and projections onto different generalizations of second-order cones, such as the extended SOC~\cite{Nemeth2015,Ferreira2018}. Note that even for sets for which the projection is given by a closed form, the projection onto the intersection of such sets is not generally given by a closed form. For example, this is generally the case in the study by \cite{bauschke18}, which examines projections onto the intersection of standard cones with balls or spheres, each of which possesses a simple closed-form solution. %While such projections often lack closed-form solutions, \cite{bauschke18} identifies specific cases where closed-form projectors can be derived, demonstrating their applicability by determining the copositivity of real symmetric matrices. 

        The main result of the current paper shows that the evaluation of the projection onto the capped RSOC remains a closed-form solution that can be evaluated within a similar $O(n)$ complexity bound as the SOC projection. The remainder of the paper is organized as follows. It begins by providing the mathematical definitions and background. Next, it presents the results, which include a proof of a closed-form  solution to the projection problem.
        
        %and computational experiments to compare the run time and performance statistics achieved using the closed-form expression with those of first-order methods. Specifically, by invoking sparse regression problems as a simple testing ground for applying our projection (in an otherwise unconstrained optimization problem), we compare our solution with a state-of-the-art, first-order general conic optimization solver.  %other methods
 %Using this result we apply it in the special case of a three-dimensional cone to develop two projection-based first-order methods for sparse regression. %We then apply the projection solution result to solving regression with group sparsity, a problem that has been considered more recently in the context of sparse regression~\cite{Hazimeh2021,Zhang2023}. Finally,} we conclude with a discussion of extensions to make our results applicable to a wide range of sparse optimization problems. 

	\subsection{Definitions}
	For a positive integer $n$ and vectors $v,w\in\real^n$, let $v\cdot w$ denote their dot (scalar) product. For a vector $w\in\real^n$, let $\vectornorm{w}$ denote its Euclidean norm. %Given a nonempty closed convex set $\mathcal{C}$, the Euclidean projection 
	%$P_{\mathcal{C}}(\hat{w})=\argmin\{||w-\hat{w}||^2:w\in \mathcal{C}\}$. 
    The proximal operator is an operator associated with a proper, lower semi-continuous convex function $f$ from a Hilbert space $\mathbb{R}^n$ to $[-\infty, \infty]$. It is defined by
	\begin{align}
		\text{prox}_{f}(w) = \arg\min_{u \in \mathbb{R}^n} \left( f(u) + \frac{1}{2} \|u - w\|_2^2 \right).\nonumber
	\end{align}
	%Note, if the
    An important special case arises when function $f$ is the indicator function of a convex set $ S \subseteq \mathbb{R}^n $,   
	\[
	I_S(w) = 
	\begin{cases} 
		0 & \text{if } w \in S, \\
		\infty & \text{if } w \notin S
	\end{cases}.
	\]
	Then, its proximal operator is given by %of \( f \) becomes the projection onto the set $S$, given by:  
	$\prox_{I_S(w)} = \arg\min_{\mathsf{u} \in S} \|\mathsf{u} - w\|_2=P_S(w)$,  
	 where $P_S(w)$ is the Euclidean projection of $w$ onto set $S$. Letting $w_{1:k}$ denote the subvector $(w_1,\ldots,w_{k})$, for the standard second-order cone (SOC) %given by %that is
    $C=\set{w\in\real^n}{\vectornorm{w_{1:n-1}}\leq w_n}$, the projection is given by the simple closed form~\cite[Theorem 3.3.6]
    {Bauschke96}, %for $w\in\real^n$, %let $\tilde{w}=(w_1,\dots,w_{n-1})$ then 
	\begin{align*}
		P_C(w)=\begin{cases}
			w & \|w_{1:n-1}\|\leq w_n\\
			0 & \|w_{1:n-1}\|\leq -w_n\\
			\frac{\|w_{1:n-1}\|+w_n}{2}(\frac{w_{1:n-1}}{\|w[n-1]\|},1) & \text{otherwise.}
		\end{cases}
	\end{align*} 
	%Let $C_r$ be the standard rotated second-order cone, that is $C_r=\set{(w,w_{n-1},w_n)}{\vectornorm{w}\leq w_{n-1}w_{n}}$
	Let $C_r$ be the rotated second-order cone (RSOC), defined as $C_r=\set{(w\in\real^{n-2}\times\real^2_{\text{+}}}{\vectornorm{w_{1:n-2}}^2\leq 2w_{n-1}w_{n}}$. Indeed, $C_r$ can be alternatively expressed using the rotation matrix
\begin{align}
			M=\begin{pmatrix}
			I_{n-2} & 0 & 0 \\
			0 & -\frac{1}{\sqrt{2}} & \frac{1}{\sqrt{2}} \\
			0 & \frac{1}{\sqrt{2}} & \frac{1}{\sqrt{2}}
		\end{pmatrix},\nonumber
	\end{align}
 and the SOC $C$, so that $\mathsf{w}\in C_r$ if and only if $M\mathsf{w}\in C$. 
	%	the intersection of a RSOC and a box, is given by
	%	\begin{equation}\label{eq:xset} \mathcal{P}=\left\{(x,y,z)\in\real^{n}\mid x=(x_1,\dots,x_{n-2})\in\real^{n-2},\vectornorm{x}^2\leq yz, 0\leq y, 0\leq z \leq u\right\}
	%	\end{equation} 
	%	and
	Letting $u>0$, the capped RSOC is the intersection  of a scaled RSOC and a box facet given by
%	\begin{equation}\label{CappedScaledRSOCobject}
%		\begin{aligned}
			$\mathcal{P} = \left\{\mathsf{w}\in\real^{n} \mid %M\mathsf{w}\in C,\; 
            \mathsf{w}\in C_r,\; w_n \leq u\right\}$. 
%		\end{aligned}
%	\end{equation}	
%	\begin{equation}\label{CappedScaledRSOCobject} \ishy{\mathcal{P}}= &\left\{(x,y,z)\in\real^{n}\mid x=(x_1,\dots,x_{n-2})\in\real^{n-2},\;\ishytwo{(x,y,z)\inC_r},\; 0\leq y,\; 0\leq z \leq u\right\}\\ &=\left\{(x,y,z)\in\real^{n}\mid x=(x_1,\dots,x_{n-2})\in\real^{n-2},\;\ishytwo{M(x,y,z)\in C},\; 0\leq y,\; 0\leq z \leq u\right\}.
%	\end{equation}
%	The projection of $\hat{\mathsf{w}}\equiv (\hat{x}^{T}, \hat{y}, \hat{z})^{T}\in\mathbb{R}^n$ %for such 
%	\noamr{onto} $\mathcal{P}$ is \noamr{the optimization problem} $\min\set{\|(\mathsf{w}-\hat{\mathsf{w}}\|^2_2}{\mathsf{w}\in\mathcal{P}}$.
In the following, we study the problem of evaluating $P_\mathcal P(\hat w)$ for any $\hat w\in\real^n$. 
%	
%	{(x,y,z)\in\real^{n-2}\times\real_+\times[0,u]}
%	
%	\begin{subequations}\label{opt15}	
%		\begin{align}
%			& \min_{\noamr{(x,y,z)\in\real^{n-2}\times\real_+\times[0,u]}} && \|((x,y,z)-(\hat{x},\hat{y},\hat{z})\|^2_2\label{opt9999}\\
%			& \text{subject to} && 2x^T x\leq yz. \label{opt8}
%			%	& &&  0\leq z \leq u\label{opt20}.\\
%			%	& && 0 \leq y\label{opt292}.
%		\end{align}
%	\end{subequations}
%	In the following,
In particular, \rev{we} derive and prove the closed-form solution for this problem. %\eqref{opt15} 
	%and develop a 
	This is expected to generate projection-based solution methods for optimization problems whose feasible region typically involves the Cartesian product of a large (finite) number of such sets $\mathcal{P}$.
	%\FloatBarrier
	% Cone in Maple
	%\begin{figure}{}	
	%	\centering
	%	\includegraphics[scale=0.25]{ConeDraw1}%	
	%\caption{3-Dimentional Rotated Second Order Cone.} \label{fig:ex0}
	%\end{figure}
	
	%	 Note that this perspective relaxation can be written as a problem with objective  
	%	\[
	%		q(x)+\sum_{i=1}^n\min_{(x_i,y_i,z_i)\in X}\{\lambda z_i+d_i(y_i-x_i^2)\}=\sum_{\substack{i=1,\ldots,n\\ x_i\neq 0}}\min_{y_i\geq x_i^2}\{\gamma_1 x_i^2/y_i+\gamma_2(y_i-x_i^2)\}.
	%		\]
	%		This fact is exploited in~\cite{Dong2015,hazimeh20} to show that $\gamma_1d(x)+\gamma_2\vectornorm{x}^2=2\gamma_1\sum_{i=1}^n\mathcal B(\sqrt{\gamma_2/\gamma_1}x_i)$, where $\mathcal B(x)=\begin{cases}\abs{x}   & \abs{x} \leq 1\\ (x^2+1)/2 & \text{otherwise,}  \end{cases}$ is the reverse-Huber function. This result and the separability of $d$ is further exploited for developing coordinate-descent solution methods in the space of only the $x$ variables in~\cite{hazimeh20}. While this strategy is shown to be effective for solving~\eqref{form:sparsereg} with a  standard least-squares objective and sparsity penalty, it may not easily generalize to more elaborate models. In the following we study the projection onto general higher-dimensional $X$, in order to support the development of direct projection-based methods for solving~\eqref{form:l0relax}, while being applicable also under more general sparsity structures such as group sparsity~\cite{Hazimeh2021}.}

	%\subsection{Motivation: Sparse Optimization with Binary Indicator Variables}
	%\subsubsection{Special Case: Sparse Regression}
	
	\section{Projecting onto the Capped \noamr{RSOC}}\label{sec:projectsec}
	%Here 
	\rev{We now describe our main result,} %that include 
	starting with a statement of the closed-form expression for the projection onto the intersection of a facet of a box and an %\ishyone{scaled} 
    RSOC. % \rev{as well as computational results}.
	
	\subsection{The Closed Form of the Projection}
	
	%We now focus on developing the solution for 
	\rev{The} projection of $\hat{\mathsf{w}}\in\mathbb{R}^{n}$ onto $\mathcal{P}$ %that 
	is \rev{given by the}  %solution \rev{that is optimal to} the optimization problem 
    minimizer $\min_{\mathsf{w}\in \mathcal{P}} \{\vectornorm{\mathsf{w}-\hat{\mathsf{w}}}\}$. %, an optimal solution of problem \eqref{opt15}. 
	Let $\tilde{x}=\frac{\ishy{2}u^2}{{\alpha}^2-2u\hat{y}+\ishy{2}u^2}\hat{x}$, where %\noamr{[isn't this definition of $\tilde x$ circular?]}
    $\alpha$ is the solution of a cubic equation given in Appendix~\ref{sec:appendix1}. The following theorem establishes the closed-form expression that solves the projection onto the capped RSOC.

	\begin{theorem}\label{opt979}	
		The %(unique) optimal 
		projection of $\hat{\mathsf{w}}$ onto $\mathcal{P}$, %$(x^*,y^*,z^*)$
	\begin{equation}\label{eq:cfsoln}
		(x^*,y^*,z^*)=
		\begin{cases}
			(\hat{x},\hat{y},u), & \vectornorm{\hat{x}}^2< \ishy{2}u\hat{y},\; u\leq\hat{z}\\
			(\ishy{\tilde{x}},\frac{\vectornorm{\ishy{\tilde{x}}}^2}{\ishy{2}u},u), &   \hat{z}\geq u-\frac{\ishy{\tilde{x}}}{u}\cdot(\hat{x}-\ishy{\tilde{x}})-\frac{\vectornorm{\ishy{\tilde{x}}}^2}{\ishy{2}u^2}(\hat{y}-\frac{\vectornorm{\ishy{\tilde{x}}}^2}{\ishy{2}u}),\; \vectornorm{\ishy{\tilde{x}}}\leq\vectornorm{\hat{x}},\; \hat{y}\leq\frac{\vectornorm{\ishy{\tilde{x}}}^2}{\ishy{2}u}\\
			MP_{C}(M\hat{\mathsf{w}}) & \text{otherwise,}
		\end{cases}
	\end{equation}
	\end{theorem}
    
	\noindent The proof of Theorem \ref{opt979} requires the following lemmas. The first lemma establishes the necessary and sufficient conditions for projection onto the intersection of $\mathcal{P}$ and $\{\mathsf{w}\mid z=u\}$; see also Figure \ref{fig:ex16}.
	
	\begin{lemma}\label{opt994}
		$(\hat{x},\hat{y},\hat{z})\in \mathbb{R}^{n}$ satisfies $\vectornorm{\hat{x}}^2\leq \ishyone{2}u\hat{y}$ and $u\leq \hat{z}$   if and only if $P_\mathcal{P}(\hat{x},\hat{y},\hat{z})=(\hat{x},\hat{y},u)$.
	\end{lemma}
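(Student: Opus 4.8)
The plan is to apply the projection theorem for closed convex sets directly, exploiting the fact that the candidate projection $(\hat x,\hat y,u)$ differs from $(\hat x,\hat y,\hat z)$ only in the last coordinate, so the characterizing variational inequality collapses to a one-dimensional condition on $z$.

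First I would record that $(\hat x,\hat y,u)\in X$ in both directions. In the ``only if'' direction this is immediate from $P_X(\hat x,\hat y,\hat z)=(\hat x,\hat y,u)$, and membership already delivers one of the two claimed conditions, namely $\vectornorm{\hat x}^2\le u\hat y$ (together with $\hat y\ge 0$ and $u\le u$). In the ``if'' direction, the hypothesis $\vectornorm{\hat x}^2\le u\hat y$ forces $\hat y\ge \vectornorm{\hat x}^2/u\ge 0$, and with $0<u\le u$ this shows $(\hat x,\hat y,u)\in X$.

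Next I would invoke the projection theorem: since $X$ is closed and convex, $P_X(\hat x,\hat y,\hat z)=(\hat x,\hat y,u)$ if and only if $(\hat x,\hat y,u)\in X$ and $(x_0-\hat x,y_0-\hat y,z_0-u)\cdot(\hat x-\hat x,\hat y-\hat y,\hat z-u)\le 0$ for every $(x_0,y_0,z_0)\in X$. Because the first $n-1$ components of $(\hat x-\hat x,\hat y-\hat y,\hat z-u)$ vanish, this inner product equals $(z_0-u)(\hat z-u)$, so the optimality condition reduces exactly to: $(z_0-u)(\hat z-u)\le 0$ for all $(x_0,y_0,z_0)\in X$.

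Finally I would close both directions from this reduced condition. For ``if'': since $\hat z\ge u$ we have $\hat z-u\ge 0$, and every point of $X$ satisfies $z_0\le u$, hence $(z_0-u)(\hat z-u)\le 0$; combined with $(\hat x,\hat y,u)\in X$ the projection theorem gives the claim. For ``only if'': evaluating the reduced inequality at the feasible point $(\textbf{0},0,0)\in X$ gives $-u(\hat z-u)\le 0$, and dividing by $u>0$ yields $\hat z\ge u$. The only care required is checking membership of the two test points $(\hat x,\hat y,u)$ and $(\textbf{0},0,0)$ in $X$, which are trivial; there is no real obstacle here beyond this bookkeeping, so I expect the proof to be short.
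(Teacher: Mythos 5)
Your proposal is correct and follows essentially the same argument as the paper: both directions reduce, via the projection theorem, to the one-dimensional condition $(z_0-u)(\hat z-u)\le 0$ for all $(x_0,y_0,z_0)\in X$, with membership of $(\hat x,\hat y,u)$ in $X$ handled exactly as you describe. The only immaterial difference is the test point used in the ``only if'' direction — you take $(\mathbf{0},0,0)$ while the paper takes $(\mathbf{0},0,u/2)$ — and both yield $\hat z\ge u$ since $u>0$.
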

	\begin{proof}
		Assume that $(\hat{x},\hat{y},\hat{z})\in \mathbb{R}^n$ satisfies $P_\mathcal{P}(\hat{x},\hat{y},\hat{z})$=$(\hat{x},\hat{y},u)$. $(\hat{x},\hat{y},u)\in \mathcal{P}$ implies that $\vectornorm{\hat{x}}^2\leq \ishyone{2}u\hat{y}$. By the projection theorem, $(x_{0}-\hat{x},y_{0}-\hat{y},z_{0}-u)\cdot(\hat{x}-\hat{x},\hat{y}-\hat{y},\hat{z}-u)=(z_{0}-u)(\hat{z}-u)\leq 0$ for every  $(x_0,y_0,z_0)\in \mathcal{P}$. In particular, for $(x_0,y_0,z_0)=(\mathbf{0},0,u/2)\in \mathcal{P}$, it follows that
		$-\frac{u}{2}(\hat{z}-u)\leq 0$,
		and since $u>0$, it also follows that $\hat{z}\geq u$. Now, to prove the converse, let $(\hat{x},\hat{y},\hat{z})\in \mathbb{R}^{n}$ satisfy $\vectornorm{\hat{x}}^2\leq\ishyone{2}u\hat{y}$ and $u\leq \hat{z}$, and consider (arbitrary) $(x_{0},y_{0},z_{0})\in \mathcal{P}$. Since $z_{0}\leq u$ and $u\leq\hat{z}$, for $(x,y,z)=(\hat{x},\hat{y},u)$, it follows that $(x_{0}-\hat{x},y_{0}-\hat{y},z_{0}-u)\cdot(\hat{x}-\hat{x},\hat{y}-\hat{y},\hat{z}-u)=(z_{0}-u)(\hat{z}-u)\leq 0$. Therefore, by the  projection theorem, $P_\mathcal{P}(\hat{x},\hat{y},\hat{z})=(\hat{x},\hat{y},u)$.
	\end{proof}
	\begin{figure}	
		\centering
		\includegraphics[scale=0.455]{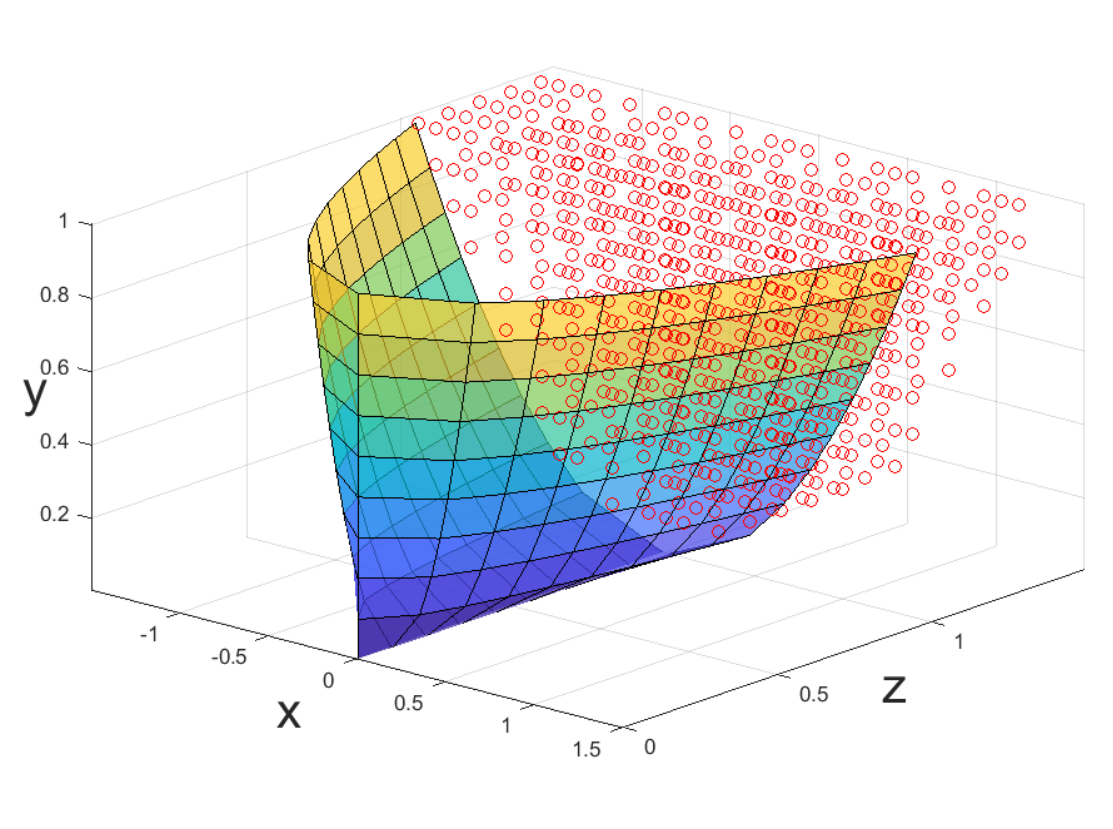}%
		\caption{An illustration of the region that projects onto the plane in the interior of the parabola $\{\mathsf{w}\in \mathcal{P} \mid z=u \}$ (marked with red circles).}\label{fig:ex16}
        
	\end{figure}
	
	For convenience, in the following, define a subset of the boundary of $\mathcal P$, $
	\tilde{P}\equiv\set{(x,y,z)\in \mathcal P} {\vectornorm{x}^2=\ishyone{2}yz,\; z=u}$. This set and the associated regions that project onto it are illustrated in Figure \ref{fig:ex13}. 	% Cone in Maple
	\begin{figure}
		\centering
		\includegraphics[scale=0.45]{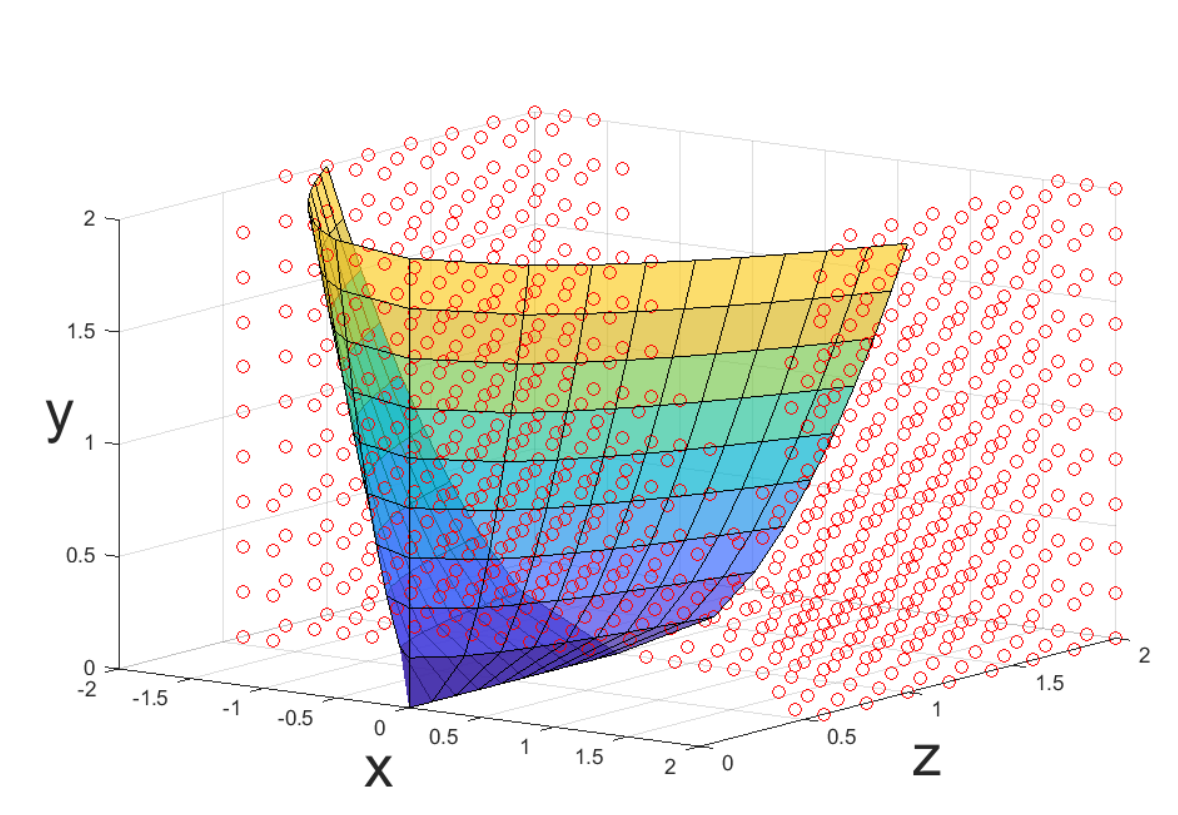}
		%\vspace{-5pt}
		\caption{An illustration of the regions (marked with red circles) that get projected onto the parabola in the intersection of the RSOC and the plane (given by $z=u$), $\tilde P=\{(x,y,z)\in \mathcal{P} \mid x^2=y, \; z=0.5 \}$.}\label{fig:ex13}
	\end{figure}
	The following lemma establishes necessary and sufficient conditions for projection onto the set $\tilde{P}$.
\begin{lemma}\label{ProjectionOnPositiveParabolaPart1}
	Suppose $(x,y,z)\in \tilde{P}$ and $(\hat{x},\hat{y},\hat{z})\in \mathbb{R}^{n}$. Then $P_\ishy{\mathcal{P}}(\hat{x},\hat{y},\hat{z})$=$(x,y,z)$ if and only if 
	\begin{enumerate}[label={[\roman*]},wide=0pt]
		\item\label{item1} $\hat{y}\leq \frac{\vectornorm{x}^2}{\ishy{2u}}$, 
        \item\label{item2} $\vectornorm{x} \leq \vectornorm{\hat{x}}$, \item\label{item3} $\hat{z}\geq u-\frac{1}{u}x\cdot(\hat{x}-x)-\frac{y}{u}(\hat{y}-\frac{\vectornorm{x}^2}{\ishy{2u}})$ and 
        \item\label{item4} $x=\frac{\ishy{2}u^2}{\vectornorm{x}^2-2u\hat{y}+\ishy{2}u^2}\hat{x}$.
	\end{enumerate}
\end{lemma}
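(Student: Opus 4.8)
The plan is to invoke the projection theorem in its variational-inequality form: since any $(x,y,z)\in\tilde P$ automatically lies in $X$ (indeed $\|x\|^2=yz$, $y\ge 0$, $0\le z=u\le u$), we have $P_X(\hat x,\hat y,\hat z)=(x,y,z)$ if and only if
\[
(\hat x-x)\cdot(x_0-x)+(\hat y-y)(y_0-y)+(\hat z-z)(z_0-z)\le 0\qquad\text{for all }(x_0,y_0,z_0)\in X .
\]
Both implications will be handled by substituting carefully chosen feasible points into this inequality (for the ``only if'' direction) or by a direct estimate of its left-hand side (for the ``if'' direction), always using the dictionary $y=\|x\|^2/u$, $z=u$ that defines $\tilde P$.

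For the forward direction I would assume the displayed inequality holds for all feasible $(x_0,y_0,z_0)$ and extract the four conditions in the order \ref{item4}, \ref{item1}, \ref{item2}, \ref{item3}. Using the feasible curve $s\mapsto\bigl(x+sv,\|x+sv\|^2/u,u\bigr)$, which stays in $\tilde P\subseteq X$ for every $v\in\real^{n-2}$ and $s$ near $0$, one divides by $s$ and lets $s\to 0^+$ and $s\to 0^-$ to obtain the identity $(\hat x-x)\cdot v+\frac{2}{u}\bigl(\hat y-\|x\|^2/u\bigr)\,x\cdot v=0$ for all $v$, i.e.\ $\hat x=\frac{2\|x\|^2-2u\hat y+u^2}{u^2}\,x$. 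Next, the feasible point $\bigl(x,\|x\|^2/u+r,u\bigr)$ with $r\ge 0$ forces $\bigl(\hat y-\|x\|^2/u\bigr)r\le 0$, which is \ref{item1}; once \ref{item1} is available the scalar $\frac{2\|x\|^2-2u\hat y+u^2}{u^2}$ is $\ge 1>0$, so dividing the previous identity by it gives \ref{item4}, and taking norms gives \ref{item2}. Finally, the feasible point $\bigl(x,\|x\|^2/(u-t),u-t\bigr)$ with $0\le t<u$, after dividing by $t$ and letting $t\to 0^+$, yields $\hat z\ge u+\frac{\|x\|^2}{u^2}\bigl(\hat y-\|x\|^2/u\bigr)$; rewriting $x\cdot(\hat x-x)$ by means of \ref{item4} turns this into exactly \ref{item3}.

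For the converse I would assume \ref{item1}--\ref{item4}, set $\lambda=\frac{\|x\|^2-u\hat y}{u^2}$ and $\mu=\hat z-u+\lambda\|x\|^2/u$, and observe that \ref{item1} gives $\lambda\ge 0$, the ``raw'' form of \ref{item3} gives $\mu\ge 0$, and \ref{item4} (using $1+2\lambda\ne 0$) gives $\hat x-x=2\lambda x$, while $\hat y-y=-\lambda u$ and $\hat z-z=\mu-\lambda\|x\|^2/u$ by the definitions of $\lambda$ and $\mu$. Substituting these into the left-hand side of the variational inequality, all the $\|x\|^2$ terms combine and, after discarding the nonpositive term $\mu(z_0-u)$ (using $z_0\le u$, $\mu\ge 0$), the expression reduces to $\lambda\bigl(2\,x\cdot x_0-uy_0-\frac{\|x\|^2}{u}z_0\bigr)$. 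It then suffices to show the bracket is $\le 0$ on $X$: Cauchy--Schwarz together with $\|x_0\|^2\le y_0z_0$ gives $2\,x\cdot x_0\le 2\|x\|\sqrt{y_0z_0}$, and the AM--GM inequality gives $2\|x\|\sqrt{y_0z_0}\le uy_0+\frac{\|x\|^2}{u}z_0$; multiplying by $\lambda\ge 0$ completes the argument.

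The part that needs the most care is the degenerate case $x=\mathbf 0$ (equivalently $y=0$), where the bound $y\ge 0$ becomes active and the parabolic ``curve'' test points collapse: there one must check that the first-order argument still forces $\hat x=\mathbf 0$ and that \ref{item1}--\ref{item4} reduce to $\hat x=\mathbf 0$, $\hat y\le 0$, $\hat z\ge u$, for which the variational inequality is immediate. Relatedly, one must verify that the denominator $2\|x\|^2-2u\hat y+u^2$ appearing in \ref{item4} is strictly positive before dividing by it, which is precisely why \ref{item1} should be established before \ref{item4} is put into its final stated form.
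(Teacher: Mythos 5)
Your proof is correct, and while it rests on the same backbone as the paper's argument (the variational-inequality characterization of $P_X$), the execution is genuinely different in both directions. For sufficiency, the paper fixes an arbitrary $(x_0,y_0,z_0)\in X$, splits into cases ($\hat y=y$ vs.\ $\hat y<y$, $z_0=0$ vs.\ $z_0\neq 0$), bounds the inner product by a quadratic in $\vectornorm{x_0}$ via Cauchy--Schwarz, and shows its discriminant satisfies $D\leq\bar D$ with $\bar D=0$ exactly when [iv] holds; you instead exhibit an explicit normal-cone certificate, writing $(\hat x-x,\hat y-y,\hat z-z)=\lambda\,(2x,-u,-\vectornorm{x}^2/u)+\mu\,(0,0,1)$ with $\lambda=\frac{\vectornorm{x}^2-u\hat y}{u^2}\geq 0$ (from [i]) and $\mu\geq 0$ (from the [iv]-rewritten form of [iii]), after which the inequality reduces to $\lambda\bigl(2x\cdot x_0-uy_0-\tfrac{\vectornorm{x}^2}{u}z_0\bigr)+\mu(z_0-u)\leq 0$, handled uniformly by Cauchy--Schwarz and AM--GM with no case analysis (your chain even absorbs $z_0=0$ and the degenerate $x=\mathbf 0$ case automatically). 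For necessity, the paper substitutes the specific points $(\mathbf 0,0,0)$, $(\mathbf 0,\vectornorm{x}^2/u,u)$, $(x,\hat y,u)$ and the family $x_0=\alpha(\hat x-x)$ to force $\bar D=0$ and hence [iv]; you instead differentiate along the feasible curves $(x+sv,\vectornorm{x+sv}^2/u,u)$ and $(x,\vectornorm{x}^2/(u-t),u-t)$, obtaining the identity $\hat x=\frac{2\vectornorm{x}^2-2u\hat y+u^2}{u^2}x$ and the raw form of [iii] directly, with [i] from $(x,\vectornorm{x}^2/u+r,u)$ guaranteeing the scalar is $\geq 1$ so that [iv] and then [ii] follow. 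Your route buys a shorter, certificate-style sufficiency proof that makes the KKT structure transparent and avoids the discriminant computation; the paper's route stays entirely at the level of finitely many explicit test points and elementary quadratic reasoning, and its $\bar D=0$ identity is what later feeds Claim 6 and the cubic equation for $\vectornorm{\tilde x}$, so the two presentations serve slightly different downstream purposes. Your flagged caveats (positivity of the denominator $2\vectornorm{x}^2-2u\hat y+u^2$ before dividing, and the $x=\mathbf 0$ case) are exactly the right ones and are handled correctly by the order in which you establish [i] before [iv].
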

\begin{proof}
	Suppose that $(x,y,z)\in \tilde{P}$ and $(\hat{x},\hat{y},\hat{z})\in \mathbb{R}^{n}$, such that they \ishy{satisfy \ref{item1}-\ref{item4}}, and consider arbitrary $(x_{0},y_{0},z_{0})\in \mathcal{P}$. First consider the case where $\hat{y}=y$. If  $\hat{y}=y$, then \ishy{it follows from \ref{item4}} that $\hat{x}=x$ and from \ref{item3} that $\hat{z}\geq u$. Then, it follows that 
    \begin{align*}
		(x_{0}-x,y_{0}-y,z_{0}-z)\cdot&(\hat{x}-x,\hat{y}-y,\hat{z}-z)   \\  &=(z_{0}-z)(\hat{z}-z) \leq 0.
	\end{align*}
	
	\noindent Otherwise $\hat{y}<y$. Now consider the case where $z_0=0$. Thus, $\|x_0\|^2\leq y_0 z_0 =0$, implies that $x_0=\mathbf{0}$. Then, together with $\ref{item3}, \ref{item1}$ and the fact that $y_0\geq 0$, it follows that 
	
	\begin{align}
		& && (x_{0}-x,y_{0}-y,z_{0}-z)\cdot(\hat{x}-x,\hat{y}-y,\hat{z}-z) \\& =  && -(x\cdot(\hat{x}-x)+y(\hat{y}-y)+u(\hat{z}-u))+y_{0}(\hat{y}-y) \nonumber\\
		& \leq && -u(\hat{z}-( u-\frac{1}{u}x\cdot(\hat{x}-x)-\frac{y}{u}(\hat{y}-y))) \leq 0.\nonumber
	\end{align}
	Otherwise $z_0\neq 0$. Then, since $\frac{\|x_0\|^2}{\ishy{2}z_0}\leq y_0$, it follows from \ref{item1} that $\hat{y}-y<0$ and therefore
	\begin{align}	
		& && (x_0-x,y_0-y,z_0-u)\cdot(\hat{x}-x,\hat{y}-y,\hat{z}-u)\\ &\leq&&(x_0-x,\frac{\|x_0\|^2}{\ishy{2}z_0}-y,z_0-u)\cdot(\hat{x}-x,\hat{y}-y,\hat{z}-u)\\ & =&& \Big(\frac{\hat{y}-y}{\ishy{2}z_0}\Big)\|x_0\|^2+x_0\cdot(\hat{x}-x)-x\cdot(\hat{x}-x)\nonumber\\
		& && -y(\hat{y}-y)+(z_0-u)(\hat{z}-u)\nonumber\\
		& \leq &&\Big(\frac{\hat{y}-y}{\ishy{2}z_0}\Big)\|x_0\|^2+\|(\hat{x}-x)\|\| x_0\|-x\cdot(\hat{x}-x)\nonumber\\ & &&-y(\hat{y}-y)+(z_0-u)(\hat{z}-u).\nonumber
	\end{align}
	The last inequality followed from the Cauchy-Schwarz inequality. (In particular, the equality holds throughout for $\|x_0\|^2=y_0z_0$ and $x_0=\alpha(\hat{x}-x)$ for some $\alpha>0$.) This quadratic polynomial in $\|x_0\|$ is nonpositive for all $\|x_0\|\in\mathbb{R}$ if and only if the quadratic coefficient $ \frac{\hat{y}-y}{z_0}  <0 $ (which is satisfied in this case) and the discriminant $D\leq 0$, where	
	\begin{align}\label{opt819}
		D & = &&\|\hat{x}-x\|^2-\ishy{2}\Big( \frac{\hat{y}-y}{z_0} \Big)\\
        & && \Big(-x\cdot(\hat{x}-x)  -y(\hat{y}-y) +(z_0-u)(\hat{z}-u)\Big)\nonumber\\ 
		& = &&\|\hat{x}-x\|^2-\ishy{2}(\hat{y}-y)(\hat{z}-u)\nonumber\\
        & && +\ishy{2}(\hat{y}-y)(x\cdot(\hat{x}-x) +y(\hat{y}-y) +u(\hat{z}-u))(\frac{1}{z_0})\nonumber\\
		& \leq && \|\hat{x}-x\|^2-\ishy{2}(\hat{y}-y)(\hat{z}-u)\nonumber\\
        & && +\ishy{2}(\hat{y}-y)(x\cdot(\hat{x}-x)  +y(\hat{y}-y)+u(\hat{z}-u))(\frac{1}{u})\nonumber\\
		& = && \|\hat{x}-x\|^2+\ishy{2}(\hat{y}-y)(x\cdot(\hat{x}-x)+y(\hat{y}-y))(\frac{1}{u})\nonumber\\
		&  \equiv && \bar{D}.\nonumber
	\end{align}
	(In particular, $D=\bar{D}$ for $z_0=u$.) The last inequality followed from \ref{item1}, \ref{item3} and since $0\leq z_{0} \leq u$.\\
	Evidently, $\bar{D}=\frac{\|\sqrt{u}(\hat{x}-x)+\frac{\hat{y}-y}{\sqrt{u}}x\|^2_2}{u}\geq 0$
	follows from $u>0$ and $\vectornorm{x}^2_{2}=uy$.
	So,
	\begin{align}\label{Formu9918}
		\bar{D}=0\iff&\sqrt{u}(\hat{x}-x)+\frac{\hat{y}-y}{\sqrt{u}}x  = \mathbf{0}\\
		\overset{\substack{\text{substituting }\\ y=\vectornorm{x}^2/\ishy{2}u}}{\iff}& x=\frac{\ishy{2}u^2}{\vectornorm{x}^2-2u\hat{y}+\ishy{2}u^2}\hat{x}.\nonumber
	\end{align}
	Thus, \ref{item2} implies that $0=\bar{D}\geq D$. 
	It follows that $(\hat{x}-x,\hat{y}-y,\hat{z}-z)\cdot(x_0-x,y_{0}-y,z_0-z)\leq 0$ for all $(x_0,y_{0},z_0)\in \mathcal{P}$, and by the projection theorem, $P_\mathcal{P}(\hat{x},\hat{y},\hat{z})$=$(x,y,z)$.
	
	Now, to prove the converse, assume that $(\hat{x},\hat{y},\hat{z})\in \mathbb{R}^{n}$ satisfies $P_\mathcal{P}(\hat{x},\hat{y},\hat{z})$=$(x,y,z)$ where $(x,y,z)\in \tilde{P}$. By the projection theorem, $(x_0-x,y_0-y,z_0-z)\cdot(\hat{x}-x,\hat{y}-y,\hat{z}-z)\leq 0$ for every $(x_0,y_0,z_0)\in \mathcal{P}$. In particular, for $(x_0,y_0,z_0)=(\mathbf{0},0,0)\in \mathcal{P}$, it follows that
	% \begin{subequation}
	\begin{align*}
	x\cdot(\hat{x}-x)+y(\hat{y}-y)+u(\hat{z}-u) & =  u(\hat{z}- (u-\frac{1}{u}x\cdot(\hat{x}-x)-\frac{y}{u}(\hat{y}-y)))\\ &\geq 0,
	\end{align*}
	%\end{subequation}
	and thus \ref{item3} holds. For $(x_0,y_0,z_0)=(0, \frac{\vectornorm{x}^2}{\ishy{2u}},u)\in \mathcal{P}$, it follows that
	% \begin{subequation}
	\begin{align*}
		0 &\geq && (x_0-x,y_0-y,z_0-u)\cdot(\hat{x}-x,\hat{y}-y,\hat{z}-u) \\ 
        &=&&-x\cdot(\hat{x}-x)=\vectornorm{x}^2-x\cdot\hat{x}\geq\vectornorm{x}^2-\vectornorm{x}\cdot\vectornorm{\hat{x}}.
	\end{align*}
	%\end{subequation}
	The last inequality followed from Cauchy-Schwarz. Hence \ref{item2} holds. Now assume, for the sake of contradiction, that $\hat{y}>y$. Then, from $(x,y,z)\in \tilde{P}$ and \ref{item2}, it follows that $(x,\hat{y},u)\in \mathcal{P}$, and by choosing  $(x_0,y_0,z_0)=(x,\hat{y},u)\in \mathcal{P}$ it follows that
	% \begin{subequation}
	\begin{align*}
		(x_0-x,y_0-y,z_0-u)\cdot(\hat{x}-x,\hat{y}-y,\hat{z}-u) &=(\hat{y}-y)^2  >0,
	\end{align*}
	%\end{subequation}
	a contradiction to the projection theorem. Thus $\hat{y}\leq y=\frac{\vectornorm{x}^2}{\ishy{2u}}$ and \ref{item1} holds.
	From \ref{item1}, \ref{item2}, \ref{item3}, \eqref{opt819}, and the fact that $(x_0-x,y_0-y,z_0-z)\cdot(\hat{x}-x,\hat{y}-y,\hat{z}-z)\leq 0$ for every $(x_0,y_0,z_0)\in \mathcal{P}$, in particular for $x_0=\alpha(\hat{x}-x)$, $y_0=\frac{\alpha^2\|\hat{x}-x\|^2}{\ishy{2}u}$ and $z_{0}=u$, for some $\alpha>0$, it follows that $\bar{D}=0$. By \eqref{Formu9918}, it follows that \ref{item2} holds.\unskip \end{proof}
Recall that $\tilde{x}=\frac{\ishy{2}u^2}{{\alpha}^2-2u\hat{y}+2u^2}\hat{x}$ where $\alpha$ is given by

\begin{claim}\label{QuadraticExplicitSolution} $\ishy{\tilde{x}}$ is the unique solution of system \ref{item1}-\ref{item2} of Lemma \eqref{ProjectionOnPositiveParabolaPart1}. 		
\end{claim}
\begin{proof}
	From condition \ref{item2} of Lemma \eqref{ProjectionOnPositiveParabolaPart1} it follows that $x$ and $\hat{x}$ are proportional. Substituting  $x$ and $\hat{x}$ with their norms into condition \ref{item2} of Lemma \eqref{ProjectionOnPositiveParabolaPart1},
	\begin{align}\label{equationWithNorms}
		\vectornorm{x}^3+(-\ishy{2}u\hat{y}+\ishy{2}u^2)\vectornorm{x}-\ishy{2}u^2\vectornorm{\hat{x}}=0.
	\end{align}
	%Using \noam{an algebraic modeling tool, 
	It can be verified that \eqref{CubicEquationSolutionNewRSOC} %{opt649} 
	is either the only real solution of~\eqref{equationWithNorms} or, in the case where there are three real roots, then~\eqref{CubicEquationSolutionNewRSOC}
	is the only root satisfying %of equations
	\ref{item1}-\ref{item3} in Lemma~\eqref{ProjectionOnPositiveParabolaPart1} (See Appendix \ref{sec:CubicEquationSolution}). Note that $\alpha\equiv\vectornorm{\tilde{x}}$ and is used for convenience.
\end{proof}

The following corollary can be deduced from Lemma \ref{ProjectionOnPositiveParabolaPart1} and Claim \ref{QuadraticExplicitSolution}, and it characterizes the projection onto $\tilde{P}$.
\begin{corollary}\label{ProjectionOnParabola}
	Suppose $(\hat{x},\hat{y},\hat{z})\in \mathbb{R}^{n}$. Then, 		 
	$P_\mathcal{P}(\hat{x},\hat{y},\hat{z})=(\tilde{x},\frac{\vectornorm{\tilde{x}}^2}{\ishy{2u}},u)$ if and only if
	\begin{enumerate}[label={[\roman*]},wide=0pt]
		\item\label{item21} $\hat{y}\leq \frac{\vectornorm{\tilde{x}}^2}{\ishy{2}u}$, \item\label{item22} $\vectornorm{\tilde{x}} \leq \vectornorm{\hat{x}}$ and \item\label{item23} $\hat{z}\geq u-\frac{1}{u}\tilde{x}\cdot(\hat{x}-\tilde{x})-\frac{y}{u}(\hat{y}-\frac{\vectornorm{\tilde{x}}^2}{\ishy{2}u})$
	\end{enumerate}
\end{corollary}
\noindent Specifically, observe that $(\tilde{x},\frac{\vectornorm{\tilde{x}}^2}{\ishy{2u}},u)\in\tilde{P}$. The proof of Corollary~\ref{ProjectionOnParabola} follows from Lemma \ref{ProjectionOnPositiveParabolaPart1} and Claim \ref{QuadraticExplicitSolution}. Note that from  Claim \ref{QuadraticExplicitSolution} it follows that $\tilde{x}$ already satisfies condition \ref{item2} of Lemma \ref{ProjectionOnPositiveParabolaPart1}. \rev{Thus,} conditions \ref{item1}-\ref{item3} of Lemma \ref{ProjectionOnPositiveParabolaPart1} are necessary and sufficient conditions.
The following lemma facilitates efficient projection onto a scaled RSOC, by utilizing the known projection onto the standard SOC.
\begin{lemma}\label{projectionOntoTheBoundOfScaledRSOC}
	$P_{C_r}(\mathsf{w})=MP_{C}(M\mathsf{w})$
\end{lemma}
\begin{proof}
	Let $\mathcal{A} \in \mathbb{R}^{m \times n}$, $\mathcal{B} \in \mathbb{R}^{m}$, and $\alpha \in \mathbb{R} \setminus \{0\}$.
	 According to Theorem 6.15 in \cite%[Theorem 6.15]
     {Beck17}, if $\mathcal{A}\mathcal{A}^T=(1/\alpha)I$ and $f(\mathsf{w})=g(\mathcal{A}\mathsf{w}+\mathcal{B})$ then $prox_f(\mathsf{w})=(I-\alpha \mathcal{A}^T\mathcal{A})\mathsf{w}+\alpha\mathcal{A}^T(prox_{\alpha^{-1}}g(\mathcal{A}\mathsf{w}+\mathcal{B})-\mathcal{B})$.
	%\ishy{why it appears like this and not as $prox_f(\mathsf{w})=\alpha\mathcal{A}^T(prox_{\alpha^{-1}g}(\mathcal{A}\mathsf{w}+\mathcal{B})-\mathcal{B})$}.
	In particular, for $\alpha=1$, $\mathcal{B}=\textbf{0}$, $\mathcal{A}=M$, $f(\mathsf{w})=\mathbb{I}_{C_r}(\mathsf{w})$ and $g(\mathsf{w})=\mathbb{I}_C(\mathsf{w})$. It follows that $P_{C_r}(\mathsf{w})=prox_f(\mathsf{w})=Mprox_g(M\mathsf{w})=MP_{C}(M\mathsf{w})$.
\end{proof}
	\subsection*{Proof of Theorem \ref{opt979} }
	\begin{proof}
		We first show that the cases of the expression in the hypothesis hold using  Lemmas \ref{opt994}-\ref{projectionOntoTheBoundOfScaledRSOC}.
		
		\begin{Case}\label{Case1} ("Projection onto the $z=u$ plane") \underline{$\vectornorm{\hat{x}}^2< {2}u\hat{y}$, $u\leq\hat{z}$:}
		\end{Case}
		By Lemma \ref{opt994}, $(x^{*},y^{*},z^{*})=(\hat{x},\hat{y},u)$. Note that the only case in which $(x^{*},y^{*},z^{*})=(\hat{x},\hat{y},u)$ and the conditions of Lemma \ref{opt994} do not hold is when $\vectornorm{\hat{x}}^2=2u\hat{y}$ and $u\leq \hat{z}$.
		%Since \eqref{opt9999} is non-negative, if $(\hat{x},\hat{y},\hat{z}) \in \mathcal{P}$ then $P_\mathcal{P}(\hat{x},\hat{y},\hat{z})=(\hat{x},\hat{y},\hat{z})$ and the objective is $0$. Thus, the solution is $(\hat{x},\hat{y},\hat{z})$.\\\item[Case~1~]\ite\item[Case~1~]
		\begin{Case}\label{Case2}  ("Projection onto $\tilde{P}$") \underline{$\hat{z}\geq u-\frac{\ishy{\tilde{x}}\cdot\hat x}{u}%\cdot(\hat{x}-\ishy{\tilde{x}})
        -\frac{\vectornorm{\ishy{\tilde{x}}}^2}{\ishy{2}u^2}(\hat{y}-\frac{\vectornorm{\ishy{\tilde{x}}}^2}{\ishy{2}u}+2u), \vectornorm{\tilde{x}}\leq\vectornorm{\hat{x}},\newline \hat{y}\leq\frac{\vectornorm{\tilde{x}}^2}{2u}$:}\end{Case}
		By Corollary \ref{ProjectionOnParabola} (of Lemma \ref{ProjectionOnPositiveParabolaPart1}), the optimal solution is $(x^{*},y^{*},z^{*})=(\tilde{x},\frac{\vectornorm{\tilde{x}}^2}{2u},u)$. Note that the current case and Case \ref{Case1} are disjoint %from this case
        since $2u\hat{y}\leq\vectornorm{\hat{x}}^2$.
		\begin{Case}\label{Case3}\underline{Otherwise:}\end{Case}
		We first show that the negation of the (necessary and
		sufficient) conditions of the preceding cases, Cases \ref{Case1} and \ref{Case2}, implies that $z^{*}<u$. Suppose for the sake of deriving a contradiction that Cases \ref{Case1} and \ref{Case2} do not hold and that $z^{*}=u$. Next we will show that each of the following cases establishes a contradiction.
			\begin{enumerate}[label={[\roman*]},wide=0pt]
			\item\label{Case3.1}\underline{$\vectornorm{x^{*}}^2=2y^{*}z^{*}=2y^{*}u$:}\\
			In this case, $P_\mathcal{P}(\hat{x},\hat{y},\hat{z})=(x^{*},y^{*},z^{*})\in\tilde{P}$, and by Corollary \ref{ProjectionOnParabola}, Case \ref{Case2} holds, thereby establishing a contradiction.
			 \item\label{Case3.2}\underline{$\vectornorm{x^{*}}^2<2y^{*}z^{*}=2y^{*}u$:}\\
			 %In this case, the projection of $(\hat{x},\hat{y},\hat{z})$ onto $\mathcal{P}$ is on the box and not on the boundary of the RSOC.
			 From the projection on the plane $\{(x,y,z)\in\mathbb{R}^n\mid z=u\}$, it follows that $(x^{*},y^{*},z^{*})=(\hat{x},\hat{y},u)$, and by Lemma \ref{opt994}, $\vectornorm{\hat{x}}^2\leq \ishyone{2}u\hat{y}$ and $u\leq \hat{z}$. 
			 Since $\vectornorm{x^{*}}^2<2y^{*}z^{*}$ and $(x^{*},y^{*},z^{*})=(\hat{x},\hat{y},u)$, it follows that $\vectornorm{\hat{x}}^2 < 2u\hat{y}$, and together with $u\leq \hat{z}$, it follows that Case \ref{Case1} holds, thereby establishing a contradiction.
%			  If $\vectornorm{\hat{x}}^2< \ishyone{2}u\hat{y}$ and $u\leq \hat{z}$, then Case \ref{Case1} hold, and thereby establishing a contradiction. Otherwise, $\vectornorm{\hat{x}}^2= \ishyone{2}u\hat{y}$ and $u\leq \hat{z}$. By Lemma \ref{SpecialCasOfXTilde} it follows that $\tilde{x}=\hat{x}$ and together with $u\leq \hat{z}$, it follows that $u-\frac{\ishy{\tilde{x}}}{u}\cdot(\hat{x}-\ishy{\tilde{x}})-\frac{\vectornorm{\ishy{\tilde{x}}}^2}{\ishy{2}u^2}(\hat{y}-\frac{\vectornorm{\ishy{\tilde{x}}}^2}{\ishy{2}u})=u\leq\hat{z}$, thus Case \ref{Case2} holds and thereby establishing a contradiction.
		\end{enumerate}
%		\begin{Case}\label{Case3.1}\underline{$\vectornorm{x^{*}}^2=2y^{*}z^{*}$:}\end{Case} In this case $P_\mathcal{P}(\hat{x},\hat{y},\hat{z})=(x^{*},y^{*},z^{*})\in\tilde{P}$ and by Corollary \ref{ProjectionOnParabola}, Case \ref{Case2} holds, thereby establishing a contradiction.
%		\begin{Case}\label{Case3.2}\underline{$\vectornorm{x^{*}}^2<2y^{*}z^{*}$:}
%		 In this case, the projection of $(\hat{x},\hat{y},\hat{z})$ onto $\mathcal{P}$ is on the box and not on the boundary of the RSOC. From the projection on the plane $\{(x,y,z)\in\mathbb{R}^n\mid z=u\}$ it follows that $(x^{*},y^{*},z^{*})=(\hat{x},\hat{y},u)$ and by Lemma \ref{opt994}, $\vectornorm{\hat{x}}^2\leq \ishyone{2}u\hat{y}$ and $u\leq \hat{z}$. Note that the only case in which the conditions of Lemma \ref{opt994} hold and the conditions of Case \ref{Case1} do not hold, is when $\vectornorm{\hat{x}}^2=2u\hat{y}$ and $u\leq \hat{z}$. In this case, $\vectornorm{\hat{x}}^2=2u\hat{y}$ and by Lemma \ref{SpecialCasOfXTilde} it follows that $\tilde{x}=\hat{x}$ and together with $u\leq \hat{z}$, it follows that $u-\frac{\ishy{\tilde{x}}}{u}\cdot(\hat{x}-\ishy{\tilde{x}})-\frac{\vectornorm{\ishy{\tilde{x}}}^2}{\ishy{2}u^2}(\hat{y}-\frac{\vectornorm{\ishy{\tilde{x}}}^2}{\ishy{2}u})=u\leq\hat{z}$, thus Case \ref{Case2} holds and thereby establishing a contradiction.
%Thus the necessary and sufficient conditions of Lemma \ref{opt994} hold if and only if Case \ref{Case1} or Case \ref{Case2} holds. 	 
Thus, from the negation of the (necessary and sufficient) conditions of the preceding cases, Cases \ref{Case1} and \ref{Case2}, it follows that $z^{*}<u$. Since  $z^{*}<u$, the constraint $z\leq u$ in the projection problem is inactive in the (unique) optimal solution. Note that the conditions of Lemma \ref{projectionOntoTheBoundOfScaledRSOC}, namely that the upper bound constraint is inactive,
		correspond exactly to this case. By Lemma \ref{projectionOntoTheBoundOfScaledRSOC}, it follows that $P_{\mathcal{P}}(\hat{\mathsf{w}})=MP_{C}\bigg(M\hat{\mathsf{w}}\bigg)$.
	\end{proof}
	%\FloatBarrier
	%\begin{example}\label{example1}
	%For $(\hat{x},\hat{y},\hat{z})=(1.5,0.6,-1.6)$ and $u=1$, the solution of \eqref{opt15} comes from the solutions of $q(x)=0$. The real solutions of $q(x)=0$ are $x=0.289$, $x=0.737$ and $(x,y,z)=(0.289,0.815,0.109)$, $(x,y,z)=(0.737,-0.309,-1.759)$, are the corresponding problem \eqref{opt15} calculated solutions. Note, however $(x,y,z)=(0.737,-0.309,-1.759)$ objective's is lower than $(x,y,z)=(0.289,0.815,0.109)$ objective's $(1.197<2.100)$, but since $(x,y,z)=(0.737,-0.309,-1.759)$ is infeasible then $(x,y,z)=(0.289,0.815,0.109)$ is  \eqref{opt15} optimal solution.
	%\end{example}
	
	%\input{experiments_projection_NewPaper.tex}

	\section{Conclusions and Future Work}
	We derived and proved a closed-form expression for the problem of projection onto a capped rotated second-order cone. Utilizing the closed-form projection solution within standard projection-based algorithms, will enable the efficient solution of sparse statistical learning problems. %For example, the perspective relaxation of sparse regression problems may serve as a simple proof of concept for the applicability of the proposed projection. %and especially within accelerated variants such as FISTA, allows us to demonstrate a marked improvement in solution time compared with state-of-the-art, (accelerated) first-order conic optimization solvers. Ongoing and future work deploys our rapid projection computations within first-order methods 
%	to solve the perspective relaxation of sparse regression problems %problem with $L_0$ and $L_2$  
 %   with group sparsity -- similar to the models considered in recent work~\cite{Hazimeh2023,Zhang2023} -- and to solve problems involving nonsmooth (soft margin loss) objectives, such as in the case of sparse SVMs. 
 %These methods can be compared with solving the compact Boolean (reverse Huber) based relaxation, which has been shown to be equivalent to the perspective relaxation of the sparse regression problem~\cite{Pilanci2015,Dong2015,Hazimeh20}. 
    %Our preliminary experiments suggest an advantage in the case of the more involved group sparsity setting considered in~\cite{Hazimeh2023}.} 
    Another line of work that we are pursuing is to  %developing a
		extend projection-based operator splitting methods (e.g., ADMM), similar to the one proposed in~\cite{Odonoghue2016}, to make use of our rapid projection computations to solve the perspective relaxation of more elaborate linearly constrained formulations (such as cardinality constrained portfolio optimization problems) or as an alternative approach for solving sparse SVM problems.

	\bibliography{cq}
	\bibliographystyle{abbrv}
	
	%\printbibliography
	\appendix
	%\section{Appendices}
	
	%\section{Optimality Conditions of Projection onto an RSOC}\label{app:oc}

    \appendix

\section{The Unique Solution of the Cubic Equation\label{sec:appendix1}}
\begin{align}\label{CubicEquationSolutionNewRSOC}
		\alpha &=\frac{\sqrt{\ishy{2u}}}{6}\bigg(54\sqrt{\ishy{2u}}\vectornorm{\hat{x}}+6\sqrt{\ishy{162}\vectornorm{\hat{x}}^{2}u-48\hat{y}^{3}+\ishy{144}\hat{y}^{2}u-\ishy{144}\hat{y}u^2+\ishy{48}u^3}\bigg)^{\frac{1}{3}}\\ & +\sqrt{\ishy{2u}}\Bigg(-\ishy{2u}+2\hat{y}\Bigg)\bigg(54\sqrt{\ishy{2u}}\vectornorm{\hat{x}}+6\sqrt{\ishy{162}\vectornorm{\hat{x}}^{2}u-48\hat{y}^{3}+\ishy{144}\hat{y}^{2}u-\ishy{144}\hat{y}u^2+\ishy{48}u^3}\bigg)^{-\frac{1}{3}}.\nonumber
	\end{align}

\section{General Cubic Equation Solutions}\label{sec:CubicEquationSolution}
	
	Let $d=\frac{u\hat{y}-0.5u^2}{3}$, $f=\frac{-u^2\vectornorm{\hat{x}}}{4}$, $\theta=\arccos(\frac{f}{\sqrt{d^3}})$,\\
	and $h=
	\begin{cases}
	\frac{d}{\bigg[\frac{u^2\vectornorm{\hat{x}}}{4}-\sqrt{{(\frac{u^2\vectornorm{\hat{x}}}{4})}^2-{({\frac{u\hat{y}-0.5u^2}{3}})}^3}\bigg]^\frac{1}{3}}, &  \vectornorm{\hat{x}}\bigg[u\vectornorm{\hat{x}}-\sqrt{\vectornorm{\hat{x}}^2-{\frac{2u(2\hat{y}-u)^3}{27}}}\bigg]^\frac{1}{3}\neq 0 \\
	0, & \vectornorm{\hat{x}}\bigg[u\vectornorm{\hat{x}}-\sqrt{\vectornorm{\hat{x}}^2-{\frac{2u(2\hat{y}-u)^3}{27}}}\bigg]^\frac{1}{3}=0
	\end{cases}$.
	According to \cite{winkler93}, if  $f^2<d^3$ then the equation 
	
	\begin{align}\label{Formu1040}
	w^3+(0.5u^2-u\hat{y})w-0.5u^2\vectornorm{\hat{x}}=0
	\end{align}
	has three real solutions:
	\begin{align}
	w_1&=-2\sqrt{d}\cos(\frac{\theta}{3})\nonumber\\
	w_2&=-2\sqrt{d}\cos(\frac{\theta+2\pi}{3})\nonumber\\
	w_3&=-2\sqrt{d}\cos(\frac{\theta-2\pi}{3}).\nonumber
	\end{align}
	
	Otherwise, if $f^2\geq d^3$, then \eqref{Formu1040} has only one real solution
	$h+\vectornorm{\hat{x}}\bigg[\frac{u^2\vectornorm{\hat{x}}}{4}-\sqrt{{(\frac{u^2\vectornorm{\hat{x}}}{4})}^2-{({\frac{u\hat{y}-0.5u^2}{3}})}^3}\bigg]^\frac{1}{3}$.
	
%	The solutions of \eqref{Formu1040} can be written in the complex form \cite{winkler93}: 
%	\begin{align}
%	\hat{w}_1&=g+h\nonumber\\
%	\hat{w}_2&=-\frac{1}{2}(g+h)+\frac{\sqrt{3}i}{2}(g-h)\nonumber\\
%	\hat{w}_3&=-\frac{1}{2}(g+h)-\frac{\sqrt{3}i}{2}(g-h)\nonumber
%	\end{align}

\end{document}